\newtheorem{theorem}{Theorem}[section]
\newtheorem{corollary}[theorem]{Corollary}
\newtheorem{lemma}[theorem]{Lemma}
\newtheorem{proposition}[theorem]{Proposition}
\newtheorem{definition}[theorem]{Definition}
\newtheorem{example}[theorem]{Example}
\newtheorem{remark}[theorem]{Remark}
\newcommand{\Ker}{{\rm Ker}}
\newcommand{\Coker}{{\rm Coker}}
\begin{document}
\sloppy

\title{Maximal exact structures on additive categories revisited}

\author{Septimiu Crivei}

\address{Faculty of Mathematics and Computer Science \\ ``Babe\c s-Bolyai" University \\ Str. Mihail Kog\u alniceanu 1
\\ 400084 Cluj-Napoca, Romania} \email{crivei@math.ubbcluj.ro}

\subjclass[2000]{18E10, 18G50} \keywords{Additive category, exact category, weakly idempotent complete category,
pre-abelian category, stable exact sequence.}

\begin{abstract} Sieg and Wegner showed that the stable exact sequences define a maximal exact structure
(in the sense of Quillen) in any pre-abelian category \cite{SW}. We generalize this result for weakly idempotent
complete additive categories.
\end{abstract}

\thanks{This work was supported by the Romanian grant PN-II-ID-PCE-2008-2 project ID\_2271. I would like to thank
Silvana Bazzoni for inspiring discussions on exact categories and kind hospitality during my visit at the Department
of Mathematics of Universit\'a di Padova in February-March 2010. Also, I would like to thank Dennis Sieg and Sven-Ake
Wegner for providing their paper \cite{SW}.}

\date{December 30, 2010}

\maketitle            

\section{Introduction}

Several notions of exact categories have been defined in the literature, see Barr \cite{Barr}, Heller \cite{Heller},
Quillen \cite{Q} or Yoneda \cite{Y}. They provide a suitable setting for developping a relative homological algebra, and
have important applications in different fields such as algebraic geometry, algebraic and functional analysis, algebraic
$K$-theory etc. (e.g., see \cite{Buhler} for further details).

We shall consider here the concept of exact additive category defined by Quillen \cite{Q} and refined by
Keller \cite{Keller}. In any additive category, the class of all split exact sequences defines an exact structure, and
this is the smallest exact structure. On the other hand, the other extreme, namely the class of all kernel-cokernel
pairs, defines an exact structure provided the category is quasi-abelian \cite{Rump01}, but fails to define an exact
structure in arbitrary additive categories (see the example in \cite{Rump08}).

Recently, Sieg and Wegner \cite{SW} showed that the stable exact sequences in the sense of \cite{RW} define a maximal
exact structure in any pre-abelian category, i.e. an additive category with kernels and cokernels. We shall generalize
this result to weakly idempotent complete additive categories, i.e. additive categories in which every section has a
cokernel, or equivalently, every retraction has a kernel (e.g., see \cite{Buhler}). Clearly, every pre-abelian category
is weakly idempotent complete additive. But there are significant examples of weakly idempotent complete categories
which are not pre-abelian. For instance, using the terminology from \cite{Prest}, any finitely accessible additive
category which is not locally finitely presented is weakly idempotent complete, but not pre-abelian (see Example
\ref{e:ex} below). Let us also point out that the assumption on the additive category to be weakly idempotent complete
is rather mild. This is because every additive category has an idempotent-splitting completion, also called Karoubian
completion (see \cite[p.~75]{K}), which in turn is weakly idempotent complete \cite{Buhler}. 

\section{Preliminaries}

Throughout the paper we shall use the setting of an additive category $\mathcal{C}$. In this section we give examples
of weakly idempotent complete additive categories which are not pre-abelian, and we introduce the needed
terminology.

\subsection{Examples} Following the terminology from \cite{Prest}, an additive category $\mathcal{C}$ is called
\emph{finitely accessible} if it has direct limits, the class of finitely presented objects is skeletally small, and
every object is a direct limit of finitely presented objects. Also, $\mathcal{C}$ is called \emph{locally finitely
presented} if it is finitely accessible and cocomplete (i.e., it has all colimits), or equivalently, it is finitely
accessible and complete (i.e., it has all limits). 

\begin{example} \label{e:ex} \rm (1) Let $\mathcal{C}$ be a finitely accessible additive category which is not locally
finitely presented. For instance, take the category of flat right modules over a ring which is not left coherent (see
\cite{Prest}). Then $\mathcal{C}$ is weakly idempotent complete, but not pre-abelian. Indeed, since $\mathcal{C}$
is finitely accessible, it has split idempotents \cite[2.4]{AR}, and so it is weakly idempotent complete \cite{Buhler}.
On the other hand, a finitely accessible category is locally finitely presented if and only if it has cokernels
\cite[Corollary~3.7]{Prest}. Hence $\mathcal{C}$ is not pre-abelian.  

(2) Any triangulated category is weakly idempotent complete additive, and its maximal exact structure is the trivial
one. Hence it is pre-abelian if and only if it is semi-simple (in the sense that every morphism factors into a
retraction followed by a section). 

(3) Any non-abelian category of finitely presented modules over a ring is weakly idempotent complete additive, but not
pre-abelian. It has cokernels, hence split idempotents, but not enough kernels.
\end{example}

\subsection{Pullbacks} We shall need the following two results on pullbacks, whose duals for pushouts hold as well.

\begin{lemma}{\cite[Lemma~5.1]{Kelly}} \label{l:PB} Consider the following diagram in which the squares are commutative
and the right square is a
pullback: 
\[\SelectTips{cm}{}
\xymatrix{
A' \ar[d]_f \ar[r]^{i'} & B' \ar[d]_g  \ar[r]^{d'} & C' \ar[d]^h \\ 
A \ar[r]_i & B\ar[r]_d & C 
}\]
Then the left square is a pullback if and only if so is the rectangle.
\end{lemma}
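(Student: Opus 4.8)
The plan is to establish both implications by a direct diagram chase, using only the defining universal property of a pullback applied to an arbitrary test object $T$. I would first record the commutativity relations as $if = gi'$ for the left square and $dg = hd'$ for the right square, so that the outer rectangle commutes via $(di)f = h(d'i')$.

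For the forward direction, assuming the left square is a pullback, I would take $u\colon T\to A$ and $w\colon T\to C'$ with $(di)u = hw$ and proceed in two stages: since $d(iu) = hw$, the right-hand pullback supplies a unique $v\colon T\to B'$ with $gv = iu$ and $d'v = w$; then, because $iu = gv$, the left-hand pullback supplies a unique $t\colon T\to A'$ with $ft = u$ and $i't = v$, and one checks $(d'i')t = d'v = w$, so that $t$ mediates for the rectangle. For uniqueness I would note that any competitor $t'$ with $ft' = u$ and $(d'i')t' = w$ satisfies $g(i't') = (if)t' = iu$ and $d'(i't') = w$, hence $i't' = v$ by uniqueness in the right square, whence $t' = t$ by uniqueness in the left square.

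For the converse, assuming the rectangle is a pullback, I would take $u\colon T\to A$ and $v\colon T\to B'$ with $iu = gv$, set $w := d'v$, and observe $(di)u = d(gv) = h(d'v) = hw$; the rectangle then supplies a unique $t\colon T\to A'$ with $ft = u$ and $(d'i')t = w$. To see that $t$ mediates for the left square I would verify $i't = v$ from $g(i't) = (if)t = iu = gv$ together with $d'(i't) = w = d'v$ and uniqueness in the right square. Uniqueness of $t$ then follows since any $t'$ with $ft' = u$ and $i't' = v$ automatically has $(d'i')t' = d'v = w$.

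The argument involves no real obstacle: it is a pure manipulation of universal properties. The only point deserving care is to keep the two appeals to uniqueness distinct — one in the right-hand square, one in the rectangle (respectively in the left square) — when identifying two candidate mediating morphisms. The dual statement for pushouts is obtained by reversing all arrows.
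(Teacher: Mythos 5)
Your proof is correct and is the standard universal-property diagram chase; both directions are handled properly, with the uniqueness appeals correctly separated between the right-hand square and the rectangle (respectively the left square). The paper itself gives no proof of this lemma, citing Kelly's Lemma~5.1 instead, and your argument is precisely the expected one.
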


\begin{lemma}{\cite[Theorem~5]{RW}} \label{l:RW} Let $d:B\to C$ and $h:C'\to C$ be morphisms such that $d$ has a kernel
$i:A\to B$, and the pullback of $d$ and $h$ exists. Then there is a commutative diagram 
\[\SelectTips{cm}{}
\xymatrix{
A \ar@{=}[d] \ar[r]^{i'} & B' \ar[d]_g \ar[r]^{d'} & C' \ar[d]^h \\ 
A \ar[r]^i & B \ar[r]^d & C  
}\] 
in which the right square is a pullback and $i':A\to B'$ is the kernel of $d'$.
\end{lemma}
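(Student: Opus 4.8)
The plan is to obtain the morphism $i'$ from the universal property of the given pullback, and then to check by a short diagram chase that it is the kernel of $d'$. Write the pullback square as
\[\SelectTips{cm}{}
\xymatrix{
B' \ar[d]_g \ar[r]^{d'} & C' \ar[d]^h \\
B \ar[r]^d & C
}\]
Since $i$ is the kernel of $d$, we have $di=0$, so $di$ coincides with the composite of $h$ and the zero morphism $A\to C'$. The universal property of the pullback then supplies a unique morphism $i':A\to B'$ with $gi'=i$ and $d'i'=0$, and this is already the commutative diagram we want, the left-hand square commuting because $gi'=i$.

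Next I would verify that $i'$ is the kernel of $d'$. We already have $d'i'=0$. Given any $t:T\to B'$ with $d't=0$, I would compute $d(gt)=(dg)t=(hd')t=h(d't)=0$, whence $gt=is$ for a unique $s:T\to A$, since $i=\ker d$. Then $i's$ and $t$ have the same composite with $g$ (namely $is=gt$) and with $d'$ (namely $0$), so by the uniqueness part of the pullback property $i's=t$; and $i'$ is a monomorphism because $i=gi'$ is one, which yields uniqueness of $s$. Hence $i'=\ker d'$.

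I do not expect any genuine obstacle here: the argument is entirely formal, relying only on the universal properties of kernels and pullbacks, and no further hypothesis on $\mathcal{C}$ is needed at this point. The single thing to be careful about is checking that the outer rectangle of the extended diagram commutes --- via the zero morphism $A\to C'$ --- before invoking the pullback property. The dual statement, with ``cokernel'' and ``pushout'' in place of ``kernel'' and ``pullback'', follows by reversing all arrows.
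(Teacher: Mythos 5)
Your argument is correct: constructing $i'$ from the pullback's universal property applied to the cone $(i,0)$ and then checking the kernel property by the diagram chase you describe is exactly the standard proof of this fact, which the paper does not reprove but simply cites from Richman--Walker \cite[Theorem~5]{RW}. No gaps; in particular you rightly note that $i'$ is monic because $gi'=i$ is, which settles the uniqueness of the factorization.
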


\subsection{Stable exact sequences}

The following special kernels and cokernels will be of fundamental importance for our topic. We extend their definition
from the setting of pre-abelian categories, as given in \cite{RW}, to arbitrary additive categories. 

\begin{definition} \rm A cokernel $d:B\to C$ is called a \emph{semi-stable cokernel} if the pullback of $d$ along an
arbitrary morphism $h:C'\to C$ exists and is again a cokernel, i.e. there is a pullback square
\[\SelectTips{cm}{}
\xymatrix{
B' \ar[d]_g \ar[r]^{d'} & C' \ar[d]^h \\ 
B \ar[r]^{d} & C  
}\] 
with the morphism $d':B'\to C'$ a cokernel. The notion of \emph{semi-stable kernel} is defined dually. 

A short exact sequence, i.e. a kernel-cokernel pair, $A\stackrel{i}\to B\stackrel{d}\to C$ is called \emph{stable} if
$i$ is a semi-stable kernel and $d$ is a semi-stable cokernel. 
\end{definition}

Let us note some useful remarks, whose dual versions hold as well.

\begin{remark} \label{r:rem} \rm (i) Every semi-stable cokernel $d:B\to C$ has a kernel (namely, its pullback along
the morphism $0\to C$). Hence every semi-stable cokernel is the cokernel of its kernel (e.g., by the
dual of \cite[Chapter~IV, Proposition~2.4]{Ste}, whose proof works in arbitrary additive categories). 

(ii) The pullback of a semi-stable cokernel along an arbitrary morphism exists and is again a semi-stable cokernel by
Lemma \ref{l:PB}.

(iii) Every isomorphism is a semi-stable cokernel.
\end{remark}

\subsection{Exact categories} We shall consider the following concept of exact category given by Quillen \cite{Q} and
refined by Keller \cite{Keller}.

\begin{definition} \rm By an \emph{exact category} we mean an additive category $\mathcal{C}$ endowed with a
distinguished class $\mathcal{E}$ of short exact sequences satisfying the axioms $[E0]$, $[E1]$, $[E2]$ and $[E2^{\rm
op}]$ below. The short exact sequences in $\mathcal{E}$ are called \emph{conflations}, whereas the kernels and cokernels
appearing in such exact sequences are called \emph{inflations} and \emph{deflations} respectively. 

$[E0]$ The identity morphism $1_0:0\to 0$ is a deflation.

$[E1]$ The composition of two deflations is again a deflation.

$[E2]$ The pullback of a deflation along an arbitrary morphism exists and is again a deflation.

$[E2^{\rm op}]$ The pushout of an inflation along an arbitrary morphism exists and is again an inflation.
\end{definition}

Note that the duals of the axioms $[E0]$ and $[E1]$ hold as well (see \cite{Keller}). Some examples of exact categories
are the following. 

\begin{example} \rm (1) It is well-known that in any additive category the split short exact sequences define an exact
structure, and this is the minimal one. 

(2) Recall that an additive category is called \emph{quasi-abelian} if it is \emph{pre-abelian} (i.e. it has kernels and
cokernels), the pushout of any kernel along an arbitrary morphism is a kernel, and the pullback of any cokernel along an
arbitrary morphism is a cokernel. In any quasi-abelian category the short exact sequences define an exact
structure, and this is the maximal one \cite{Rump01}.

(3) In any pre-abelian category the stable exact sequences define an exact structure, and this is the maximal one
\cite{SW}.
\end{example}

\section{The maximal exact structure}

In this section we shall extend the main result of \cite{SW} from pre-abelian categories to weakly idempotent complete
additive categories. We shall state and prove some essential results on semi-stable cokernels. Note that their
dual versions for semi-stable kernels hold as well. The setting will be that of an additive category $\mathcal{C}$, if
not specified otherwise.

The following result is modelled after \cite[Theorem~2]{RW}. We include a proof for completeness.

\begin{proposition} \label{p:E1} The composition of two semi-stable cokernels is a semi-stable cokernel.
\end{proposition}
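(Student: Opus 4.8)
The plan is to verify the two defining conditions of a semi-stable cokernel for the composite. Write the two given semi-stable cokernels as $d\colon A\to B$ and $e\colon B\to C$; we must show that $ed\colon A\to C$ is a cokernel and that its pullback along an arbitrary morphism exists and is again a cokernel. By Remark~\ref{r:rem}(i) the morphisms $d$ and $e$ have kernels, say $k\colon K\to A$ and $\ell\colon L\to B$, and $d=\Coker k$, $e=\Coker\ell$.

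\emph{Step 1: $ed$ is a cokernel.} Since $d$ is semi-stable, the pullback of $d$ along $\ell$ exists, and since $d$ has the kernel $k$, Lemma~\ref{l:RW} provides a commutative diagram
\[\SelectTips{cm}{}
\xymatrix{
K \ar@{=}[d] \ar[r]^{k'} & X \ar[d]_u \ar[r]^p & L \ar[d]^\ell \\
K \ar[r]_k & A \ar[r]_d & B
}\]
in which the right square is a pullback, $uk'=k$, $du=\ell p$ and $k'=\Ker p$; moreover $p$ is a cokernel by the semi-stability of $d$. I would then show $u=\Ker(ed)$ and $ed=\Coker u$. For the first, $u$ is a monomorphism, being a pullback of the monomorphism $\ell$, and $edu=e\ell p=0$; if $edf=0$ then $df$ kills $e$, hence factors through $\ell=\Ker e$, and the pullback property of the right square factors $f$ through $u$, uniquely because $u$ is monic. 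For the second, $ed$ is an epimorphism as a composite of two cokernels; if $fu=0$ then $fk=fuk'=0$, so $f=\bar f d$ for a unique $\bar f$ since $d=\Coker k$, then $\bar f\ell p=\bar f du=fu=0$ forces $\bar f\ell=0$ because $p$ is an epimorphism, so $\bar f=\tilde f e$ for a unique $\tilde f$ since $e=\Coker\ell$, whence $f=\tilde f(ed)$, uniquely because $ed$ is an epimorphism.

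\emph{Step 2: pullbacks of $ed$ are cokernels.} Given an arbitrary $h\colon C'\to C$, I would first use the semi-stability of $e$ to form the pullback of $e$ along $h$, obtaining a cokernel $e'\colon B'\to C'$ and a morphism $g\colon B'\to B$ with $eg=he'$, and then use the semi-stability of $d$ to form the pullback of $d$ along $g$, obtaining a cokernel $d'\colon A'\to B'$ and a morphism $f\colon A'\to A$ with $df=gd'$. Stacking the two pullback squares and applying Lemma~\ref{l:PB}, the outer rectangle is a pullback, so the pullback of $ed$ along $h$ exists and is realised by the morphism $e'd'\colon A'\to C'$. Since $e'$ and $d'$ are pullbacks of the semi-stable cokernels $e$ and $d$, they are again semi-stable cokernels by Remark~\ref{r:rem}(ii), so Step~1 shows that $e'd'$ is a cokernel. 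Hence $ed$ is a semi-stable cokernel.

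\emph{Expected main obstacle.} The real work is concentrated in Step~1: recognising the kernel of the composite via Lemma~\ref{l:RW} and verifying that $(u,ed)$ is an honest kernel--cokernel pair, which repeatedly uses that $p$ and the two cokernels $d,e$ are epimorphisms. Once Step~1 is available, Step~2 is a formal pasting of pullback squares via Lemma~\ref{l:PB} together with the closure of semi-stable cokernels under pullback (Remark~\ref{r:rem}(ii)).
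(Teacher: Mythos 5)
Your proof is correct and follows essentially the same route as the paper: you pull back the first cokernel along the kernel of the second via Lemma~\ref{l:RW}, run the same diagram chase to exhibit the composite as a cokernel, and then obtain pullbacks of the composite by pasting two pullback squares with Lemma~\ref{l:PB} and Remark~\ref{r:rem}(ii), reusing Step~1. The only difference is that you also identify the kernel of the composite, which the paper omits because it is not needed for the statement.
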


\begin{proof} Let $d:B\to C$ and $p:C\to D$ be semi-stable cokernels. Then $d=\Coker(i)$ and
$p=\Coker(h)$, where $i=\Ker(d):A\to B$ and $h=\Ker(p):C'\to C$. Forming the pullback of $d$
and $h$, by Lemma \ref{l:RW} we have the following diagram with commutative squares:
\[\SelectTips{cm}{}
\xymatrix{
A \ar@{=}[d] \ar[r]^{i'} & B' \ar[d]_g \ar[r]^{d'} & C' \ar[d]^h \\ 
A \ar[r]^i & B \ar[d]_{pd} \ar[r]^d & C \ar[d]^p \\
 & D\ar@{=}[r] & D  
}\] 
in which $d'$ is a cokernel. 

We claim that $pd=\Coker(g)$. Let $u:B\to E$ be a morphism such that $ug=0$. Since $ui=0$ and $d=\Coker(i)$, there is a
unique morphism $v:C\to E$ such that $vd=u$. Since $vhd'=0$ and $d'$ is an epimorphism, we have $vh=0$. But
$p=\Coker(h)$, and so there is a unique morphism $w:D\to E$ such that $wp=v$. Now we have $wpd=u$. The fact that $p$ is
an epimorphism ensures the uniqueness of such a morphism $w$ with $wp=v$. Therefore, $pd=\Coker(g)$. 

In order to get the pullback of an arbitrary morphism $k:F\to D$ and $pd:B\to D$, construct the pullback of $k$ and
$p:C\to D$, and then the pullback of the resulting morphism and $d:B\to C$. Both of them yield semi-stable cokernels by
Remark \ref{r:rem}. Now the pullback of $pd$ along an arbitrary morphism exists by Lemma \ref{l:PB}, and it is the
resulting rectangle. Moreover, by the first part of the proof, it is a cokernel as the composition of two semi-stable
cokernels. 
\end{proof}

\begin{lemma} \label{l:ds} The direct sum of two semi-stable cokernels is a semi-stable cokernel.
\end{lemma}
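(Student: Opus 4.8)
The plan is to reduce the statement about direct sums to the already-proved facts about individual semi-stable cokernels and about pullbacks. Suppose $d_1:B_1\to C_1$ and $d_2:B_2\to C_2$ are semi-stable cokernels. First I would observe that $d_1\oplus d_2:B_1\oplus B_2\to C_1\oplus C_2$ is a cokernel: by Remark \ref{r:rem}(i) each $d_j$ has a kernel $i_j=\Ker(d_j)$ and $d_j=\Coker(i_j)$, and it is a routine verification in an additive category that $i_1\oplus i_2$ is a kernel of $d_1\oplus d_2$ and that $d_1\oplus d_2=\Coker(i_1\oplus i_2)$ (a morphism $u=(u_1\ u_2)$ out of $B_1\oplus B_2$ kills $i_1\oplus i_2$ iff $u_j i_j=0$ for each $j$, and then factors uniquely componentwise through $d_1\oplus d_2$). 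So $d_1\oplus d_2$ is a cokernel.

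Next I need to show the pullback of $d_1\oplus d_2$ along an arbitrary morphism $h:C'\to C_1\oplus C_2$ exists and is again a cokernel. Rather than handle a general $h$ directly, I would exploit Remark \ref{r:rem}(ii): it suffices to produce \emph{some} pullback-of-$d_1\oplus d_2$-along-a-morphism presentation, or better, to use the structure of $C_1\oplus C_2$. The clean route is to factor the situation through Proposition \ref{p:E1} and Remark \ref{r:rem}(iii). Indeed, consider the morphisms
\[
d_1\oplus 1_{C_2}:\ B_1\oplus C_2\to C_1\oplus C_2,
\qquad
1_{B_1}\oplus d_2:\ B_1\oplus B_2\to B_1\oplus C_2 .
\]
Since $1_{C_2}$ and $1_{B_1}$ are isomorphisms, hence semi-stable cokernels by Remark \ref{r:rem}(iii), and since the class of semi-stable cokernels is closed under the operation ``$d\mapsto d\oplus 1$" along a fixed object — this is exactly the claim that the pullback of $d_j$ along $h_j$ remains a cokernel, now applied with the extra identity summand passively carried along — one checks directly that $d_1\oplus 1_{C_2}$ and $1_{B_1}\oplus d_2$ are each semi-stable cokernels. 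Then $d_1\oplus d_2=(d_1\oplus 1_{C_2})\circ(1_{B_1}\oplus d_2)$, and Proposition \ref{p:E1} finishes the proof.

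The step I expect to be the main obstacle is verifying that $d_1\oplus 1_{C_2}$ (and dually $1_{B_1}\oplus d_2$) is genuinely a semi-stable cokernel, i.e. that its pullback along an arbitrary $h:C'\to C_1\oplus C_2$ exists and is a cokernel. The point is that a pullback square over $C_1\oplus C_2$ does not decompose as a product of pullback squares unless one is careful, since $h$ need not respect the direct-sum decomposition of the source. I would handle this by writing $h$ via its components $h=\binom{h_1}{h_2}$ with $h_j:C'\to C_j$, forming the pullback of $d_1$ along $h_1$ (which exists and is a cokernel because $d_1$ is a semi-stable cokernel, say $d_1':P\to C'$ with $g_1:P\to B_1$), and then checking that the square
\[
\xymatrix{
P\oplus C_2 \ar[d]_{g_1\oplus 1_{C_2}} \ar[r]^-{\binom{d_1'}{\,h_2 d_1'}} & C' \ar[d]^{h} \\
B_1\oplus C_2 \ar[r]_-{d_1\oplus 1_{C_2}} & C_1\oplus C_2
}
\]
is a pullback — this is a direct diagram chase using the universal property of the pullback defining $d_1'$ — and that its top map is a cokernel, being a pullback of the cokernel $d_1$ along $h_1$ with an identity summand adjoined (again reducible to the semi-stability of $d_1$, since adjoining $1_{C_2}$ does not disturb the cokernel property, one more componentwise check). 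Once this is in place, the composition argument via Proposition \ref{p:E1} is immediate, and the dual statement for semi-stable kernels follows by dualizing throughout.
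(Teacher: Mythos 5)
Your overall strategy coincides with the paper's: factor $d_1\oplus d_2=(d_1\oplus 1_{C_2})\circ(1_{B_1}\oplus d_2)$ and invoke Proposition \ref{p:E1}, so the entire content is the claim that $d\oplus 1$ is a semi-stable cokernel. Your verification of that claim, however, breaks down as written. The square you propose cannot be the pullback of $d_1\oplus 1_{C_2}$ along $h=\left[\begin{smallmatrix} h_1 \\ h_2 \end{smallmatrix}\right]$: its apex $P\oplus C_2$ is too big, and the top arrow you write, $\left[\begin{smallmatrix} d_1' \\ h_2d_1' \end{smallmatrix}\right]$, is a morphism into a direct sum, not a morphism $P\oplus C_2\to C'$. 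A morphism $P\oplus C_2\to C'$ has the form $\left[\begin{smallmatrix} t_1 & t_2 \end{smallmatrix}\right]$, and commutativity of your square with left-hand arrow $g_1\oplus 1_{C_2}$ forces $h_2t_2=1_{C_2}$, which has no solution for general $h$ (take $h_2=0$ with $C_2\neq 0$). The correct pullback of $d_1\oplus 1_{C_2}$ along $h$ has apex $P$ itself, the pullback of $d_1$ along $h_1$, with top arrow $d_1':P\to C'$ and left arrow $\left[\begin{smallmatrix} g_1 \\ h_2d_1' \end{smallmatrix}\right]:P\to B_1\oplus C_2$; one checks the universal property directly, and then $d_1'$ is a cokernel because $d_1$ is semi-stable, so $d_1\oplus 1_{C_2}$ is indeed a semi-stable cokernel. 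With that correction your argument goes through. Note also that your intermediate sentence asserting that semi-stable cokernels are ``closed under $d\mapsto d\oplus 1$'' is precisely the point at issue, so it cannot be appealed to before this computation is done.

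For comparison, the paper gets the key step with no computation at all: the square whose top row is $d\oplus 1_{B'}:B\oplus B'\to C\oplus B'$, whose bottom row is $d:B\to C$, and whose vertical arrows are the coordinate projections $\left[\begin{smallmatrix} 1 & 0 \end{smallmatrix}\right]$ is itself a pullback, exhibiting $d\oplus 1_{B'}$ as the pullback of the semi-stable cokernel $d$ along the projection $C\oplus B'\to C$; Remark \ref{r:rem}(ii) then says immediately that $d\oplus 1_{B'}$ is a semi-stable cokernel, likewise $1_C\oplus d'$, and Proposition \ref{p:E1} concludes. Your opening paragraph (that $d_1\oplus d_2$ is a cokernel with kernel $i_1\oplus i_2$) is correct but superfluous once the factorization argument is in place.
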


\begin{proof} Let $d:B\to C$ and $d':B'\to C'$ be semi-stable cokernels. Consider the pullback square
\[\SelectTips{cm}{}
\xymatrix{
B\oplus B' \ar[d]_{\left [\begin{smallmatrix} 1&0 \end{smallmatrix}\right ]} \ar[r]^{\left [\begin{smallmatrix}
d&0\\0&1 \end{smallmatrix}\right ]} & C\oplus B' \ar[d]^{\left [\begin{smallmatrix} 1&0 \end{smallmatrix}\right ]} \\ 
B \ar[r]^d & C  
}\] 
Then $\left [\begin{smallmatrix} d&0\\0&1 \end{smallmatrix}\right ]:B\oplus B'\to C\oplus B'$ is a semi-stable cokernel
by Remark \ref{r:rem}. Similarly, $\left [\begin{smallmatrix} 1&0\\0&d' \end{smallmatrix}\right ]:C\oplus B'\to C\oplus
C'$ is a semi-stable cokernel. Therefore, their composition $\left [\begin{smallmatrix} d&0\\0&d'
\end{smallmatrix}\right ]:B\oplus B'\to C\oplus C'$, that is $d\oplus d'$, is a semi-stable cokernel by Proposition
\ref{p:E1}. 
\end{proof}

\begin{corollary} \label{c:proj} Every projection onto a direct summand  is a semi-stable cokernel.
\end{corollary}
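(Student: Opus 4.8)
The plan is to express a projection as a direct sum of an isomorphism and the zero morphism onto the zero object, and then to apply Lemma \ref{l:ds}. Concretely, given a decomposition $B=B_1\oplus B_2$, write $\pi:B\to B_1$ for the projection onto $B_1$. Identifying $B_1\oplus 0$ with $B_1$, we have $\pi=1_{B_1}\oplus 0_{B_2}$, where $1_{B_1}:B_1\to B_1$ is the identity and $0_{B_2}:B_2\to 0$ is the unique morphism into the zero object.

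The first step, and the only one requiring an argument, is to check that $0_{B_2}:B_2\to 0$ is a semi-stable cokernel. It is the cokernel of $1_{B_2}$. For semi-stability, observe that every morphism with codomain $0$ is a zero morphism $0_{C'}:C'\to 0$, and the pullback of $0_{B_2}$ along $0_{C'}$ is the biproduct $B_2\oplus C'$ together with its two projections (a pullback over a zero object being just a product); in the resulting pullback square the morphism lying over $C'$ is the projection $B_2\oplus C'\to C'$, which is the cokernel of the inclusion $B_2\to B_2\oplus C'$. Hence $0_{B_2}$ is a semi-stable cokernel.

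Finally, $1_{B_1}$ is a semi-stable cokernel by Remark \ref{r:rem}(iii), so by Lemma \ref{l:ds} the direct sum $\pi=1_{B_1}\oplus 0_{B_2}$ is a semi-stable cokernel. The main (and very mild) obstacle is the verification in the second paragraph: computing the pullback along a zero morphism and recognising a projection from a biproduct as the cokernel of the complementary inclusion; both are routine facts valid in any additive category. Alternatively, one may bypass Lemma \ref{l:ds} entirely and note instead that $\pi$ is itself the pullback of the semi-stable cokernel $0_{B_2}:B_2\to 0$ along $0_{B_1}:B_1\to 0$, hence a semi-stable cokernel by Remark \ref{r:rem}(ii).
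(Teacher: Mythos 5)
Your argument is correct and takes essentially the same route as the paper: both proofs first check that the morphism $B_2\to 0$ is a semi-stable cokernel by observing that its pullback along any $C'\to 0$ is the biproduct $B_2\oplus C'$ with the projection to $C'$ a cokernel, and then write the projection as the direct sum of this map with an identity and apply Lemma \ref{l:ds}. Your closing alternative, realising the projection directly as a pullback of $B_2\to 0$ and invoking Remark \ref{r:rem}(ii), is a valid minor shortcut, but the substance coincides with the paper's proof.
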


\begin{proof} Consider a projection $\left [\begin{smallmatrix} 0&1 \end{smallmatrix}\right ]:B\oplus
D\stackrel{}\longrightarrow D$. By a diagram as in the proof of Lemma \ref{l:ds} with $C=0$, it follows that there
exists the pullback of the cokernel $B\to 0$ and any morphism $B'\to 0$. Moreover,  the resulting morphism $\left
[\begin{smallmatrix} 0&1 \end{smallmatrix}\right ]:B\oplus B'\to B'$ is a cokernel, as the direct sum of the cokernels
$B\to 0$ and $1_{B'}$. Hence $B\to 0$ is a semi-stable cokernel. To conclude, note that the morphism $B\oplus
D\stackrel{\left [\begin{smallmatrix} 0&1 \end{smallmatrix}\right ]}\longrightarrow D$ is the direct sum of the
semi-stable cokernels $B\to 0$ and $1_D$, and use Lemma \ref{l:ds}.
\end{proof}

Recall that an additive category is called \emph{weakly idempotent complete} if every retraction has a kernel
(equivalently, every section has a cokernel) (e.g., see \cite{Buhler}). The next result will be a key step in the proof
of our main theorem. It generalizes \cite[Proposition~5.12]{Kelly} and \cite[Proposition~1.1.8]{Sch}.

\begin{proposition} \label{p:obscure} Let $\mathcal{C}$ be weakly idempotent complete. Let $d:B\to C$ and $p:C\to D$
be morphisms such that $pd:B\to D$ is a semi-stable cokernel. Then $p$ is a semi-stable cokernel.
\end{proposition}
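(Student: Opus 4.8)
The idea is to reduce the statement about $p$ to facts we have already established by enlarging $B$ with a direct summand on which $d$ becomes a split map, thereby splitting off the ``difference'' between $d$ and $pd$. Concretely, I would first observe that since $pd$ is a semi-stable cokernel, it has a kernel; I want to replace $d:B\to C$ by a \emph{retraction} after adding a suitable object, so that weak idempotent completeness can be brought to bear. The natural move is to consider the morphism $\left[\begin{smallmatrix} d\\ 0\end{smallmatrix}\right]$ or rather to build the map $B\oplus C\to C$, $\left[\begin{smallmatrix} d & 1\end{smallmatrix}\right]$, which is visibly a retraction (it is split by the inclusion of the second summand). One then forms the composite
\[
B\oplus C \xrightarrow{\left[\begin{smallmatrix} d & 1\end{smallmatrix}\right]} C \xrightarrow{p} D,
\]
which equals $p\circ\left[\begin{smallmatrix} d & 1\end{smallmatrix}\right]$, and I want to recognize this composite as a semi-stable cokernel built out of the data we control. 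Indeed $p\left[\begin{smallmatrix} d & 1\end{smallmatrix}\right] = \left[\begin{smallmatrix} pd & p\end{smallmatrix}\right]$, and one checks that the square
\[\SelectTips{cm}{}
\xymatrix{
B\oplus C \ar[d]_{\left[\begin{smallmatrix} 1 & 0\end{smallmatrix}\right]'} \ar[r]^{\left[\begin{smallmatrix} d & 1\end{smallmatrix}\right]} & C \ar[d]^{p} \\
B \ar[r]^{pd} & D
}\]
(with the left vertical the obvious projection-type map, chosen so the square commutes) exhibits the top map as a pullback of $pd$ along $p$ — or, better, I would arrange matters so that $\left[\begin{smallmatrix} d & 1\end{smallmatrix}\right]$ factors as a projection $B\oplus C\to C$ composed with an automorphism of $C$, which is immediate since $\left[\begin{smallmatrix} d & 1\end{smallmatrix}\right] = \left[\begin{smallmatrix} 0 & 1\end{smallmatrix}\right]\left[\begin{smallmatrix} 1 & 0\\ d & 1\end{smallmatrix}\right]$ and the matrix $\left[\begin{smallmatrix} 1 & 0\\ d & 1\end{smallmatrix}\right]$ is an automorphism of $B\oplus C$. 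Hence $\left[\begin{smallmatrix} d & 1\end{smallmatrix}\right]$ is a semi-stable cokernel by Corollary \ref{c:proj} (a projection onto a direct summand precomposed with an isomorphism), and therefore $p\left[\begin{smallmatrix} d & 1\end{smallmatrix}\right]$ is the composite $p\circ(\text{semi-stable cokernel})$ — but that is the wrong direction, so instead I use that $p\left[\begin{smallmatrix} d & 1\end{smallmatrix}\right]$ can be rewritten, via the automorphism, in terms of $pd$ and $p$ in a way that lets me apply Proposition \ref{p:E1} or its pullback consequences.

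Let me restate the key step more carefully, since this is where the weak idempotent completeness enters. The point is: $p\left[\begin{smallmatrix} d & 1\end{smallmatrix}\right]:B\oplus C\to D$ equals $pd\oplus p$ conjugated appropriately; more precisely, using the automorphism $\varphi=\left[\begin{smallmatrix} 1 & 0\\ d & 1\end{smallmatrix}\right]$ of $B\oplus C$ one has $p\left[\begin{smallmatrix} d & 1\end{smallmatrix}\right] = \left[\begin{smallmatrix} 0 & p\end{smallmatrix}\right]\circ\varphi$, and $\left[\begin{smallmatrix} 0 & p\end{smallmatrix}\right] = p\circ\left[\begin{smallmatrix} 0 & 1\end{smallmatrix}\right]$. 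So the genuinely new content is extracting semi-stability of $p$ from semi-stability of a map of the form $B\oplus C\to D$ that factors through $p$ via a split epimorphism $B\oplus C\to C$. This is exactly the content of \cite[Proposition~5.12]{Kelly} / \cite[Proposition~1.1.8]{Sch} in the quasi-abelian setting, and the way to do it in general is: given an arbitrary $k:F\to D$, I must produce the pullback of $p$ and $k$ and show the resulting map is a cokernel. Pull $k$ back along $pd\left[\begin{smallmatrix} 0 & 1\end{smallmatrix}\right]'$-type maps using that $pd$ is semi-stable; this produces a pullback square over $F$ whose horizontal map is a cokernel with domain of the form $B\oplus(\text{pullback of }C)$. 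Then I use weak idempotent completeness to split off the $B$-summand: the inclusion $B\to B\oplus(\cdots)$ is a section, hence has a cokernel, and Proposition \ref{p:E1} (composition of semi-stable cokernels) together with Corollary \ref{c:proj} (the splitting projection is a semi-stable cokernel) lets me conclude that the projected-down map, which is the pullback of $p$ along $k$, is again a cokernel.

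The main obstacle, and the reason weak idempotent completeness is exactly the right hypothesis, is the last step: one needs the section $B \hookrightarrow B\oplus X$ (arising in the pullback diagram) to have a cokernel in order to cancel the redundant summand $B$ and descend the cokernel property from the enlarged map $p\left[\begin{smallmatrix} d & 1\end{smallmatrix}\right]$ (or its pullbacks) to $p$ itself. Without that hypothesis the summand $B$ cannot be removed and the argument collapses — this is precisely why the pre-abelian case of \cite{SW} generalizes no further than to weakly idempotent complete categories. So the skeleton is: (1) write $\left[\begin{smallmatrix} d & 1\end{smallmatrix}\right] = \left[\begin{smallmatrix} 0 & 1\end{smallmatrix}\right]\varphi$ with $\varphi$ an automorphism, deducing it is a semi-stable cokernel; (2) relate $p$ to $p\left[\begin{smallmatrix} d & 1\end{smallmatrix}\right]$, a map which is more tractable because $pd$ sits inside it; (3) given an arbitrary test morphism into $D$, form the needed pullbacks using semi-stability of $pd$ and Remark \ref{r:rem}; (4) invoke weak idempotent completeness to produce the cokernel of the relevant section, and finish with Proposition \ref{p:E1} and Corollary \ref{c:proj}. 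I would expect steps (1)--(2) to be a short formal computation with $2\times 2$ matrices, and step (4) to be the crux requiring the hypothesis.
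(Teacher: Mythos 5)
Your overall route is in fact the paper's own: augment $p$ to a matrix map on a direct sum (the paper uses $\left[\begin{smallmatrix} p&0 \end{smallmatrix}\right]:C\oplus B\to D$), prove that map is a semi-stable cokernel from Lemma \ref{l:ds}, Corollary \ref{c:proj} and Proposition \ref{p:E1}, pull it back along the test morphism, and use weak idempotent completeness to split off the redundant $B$-summand. But at the first crucial point your sketch has a genuine gap: the only factorization you actually display, $p\left[\begin{smallmatrix} d&1 \end{smallmatrix}\right]=\left[\begin{smallmatrix} pd&p \end{smallmatrix}\right]=p\circ\left[\begin{smallmatrix} 0&1 \end{smallmatrix}\right]\circ\left[\begin{smallmatrix} 1&0\\ d&1 \end{smallmatrix}\right]$, contains $p$ itself as a factor, so (as you admit) it cannot establish semi-stability of $\left[\begin{smallmatrix} pd&p \end{smallmatrix}\right]$, and the promised rewriting ``in terms of $pd$ and $p$'' is never produced. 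The step can be repaired, e.g. $\left[\begin{smallmatrix} pd&p \end{smallmatrix}\right]=\left[\begin{smallmatrix} 1&0 \end{smallmatrix}\right]\circ\left[\begin{smallmatrix} 1&p\\ 0&1 \end{smallmatrix}\right]\circ(pd\oplus 1_C)$, exactly parallel to the paper's decomposition $\left[\begin{smallmatrix} p&0 \end{smallmatrix}\right]=\left[\begin{smallmatrix} 0&1 \end{smallmatrix}\right]\left[\begin{smallmatrix} 1&0\\ p&1 \end{smallmatrix}\right]\left[\begin{smallmatrix} 1&0\\ 0&pd \end{smallmatrix}\right]\left[\begin{smallmatrix} 1&-d\\ 0&1 \end{smallmatrix}\right]$ --- but this is the heart of the construction and it is missing from your plan. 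Note also that the zero entry matters when pulling back along $k:F\to D$: it is because $\left[\begin{smallmatrix} p&0 \end{smallmatrix}\right]$ kills the inclusion of $B$ that this inclusion lifts to the pullback $Y$, producing the retraction $\beta':Y\to B$ with $\gamma\delta=0$; pulling back $\left[\begin{smallmatrix} pd&p \end{smallmatrix}\right]$ directly does not give this (the inclusion of $B$ composes to $pd$, which need not factor through $k$), so you must first pass to $\left[\begin{smallmatrix} 0&p \end{smallmatrix}\right]$ via your automorphism, and say so.

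The second gap is the descent step. Once $K=\Ker(\beta')$ exists you still have to (a) check that the restricted square is genuinely a pullback of $p$ and $k$ --- a diagram chase you omit --- and (b) show the pulled-back morphism is a cokernel. For (b) you cite Proposition \ref{p:E1} and Corollary \ref{c:proj}, but those say that compositions and projections are semi-stable; they do not let you cancel a summand from a cokernel, and that cancellation is precisely the kind of statement Proposition \ref{p:obscure} asserts, so the citation is circular as it stands. What is needed is either the paper's argument (compare with the pullback of $pd$ along $k$, obtain a kernel of the pulled-back map via Lemma \ref{l:RW}, and rerun the first part of the proof) or the elementary lemma that if $f\pi$ is a cokernel with $\pi$ a split epimorphism then $f$ is a cokernel (if $f\pi=\Coker(g)$ then $f=\Coker(\pi g)$), applied to $\gamma=(\gamma i)\pi$, which holds because $\gamma\delta=0$; this is easy, and would in fact streamline the paper's final paragraph, but it must be stated and proved rather than attributed to results that do not contain it. Finally, ``semi-stable cokernel'' also requires $p$ to be a cokernel: the paper constructs $\Ker(p)$ (again by weak idempotent completeness, from the pullback of $pd$ along $p$) and proves $p=\Coker(\Ker(p))$; your sketch never addresses this. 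It can be recovered by taking $k=1_D$, but only after the two repairs above are in place.
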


\begin{proof} We first show that $p$ has a kernel. Since $pd$ is a semi-stable cokernel, there is a 
pullback square \[\SelectTips{cm}{}
\xymatrix{
L \ar[r]^t \ar[d]_q & C \ar[d]^p \\ 
B \ar[r]^{pd} & D
} \] 
The pullback property implies the existence of a morphism $r:B\to L$ such that $tr=d$ and $qr=1_B$. Hence $q$ is a
retraction, and thus, by assumption, it has a kernel $l:C'\to L$. Using again the pullback property, it follows easily
that $h=tl:C'\to C$ is the kernel of $p$. 

Now let $g:B'\to B$ be the kernel of $pd$. Since $pd$ is a semi-stable cokernel, we have $pd=\Coker(g)$. We obtain the
following commutative left diagram:
\[\SelectTips{cm}{}
\xymatrix{
B' \ar[r]^{d'} \ar[d]_g & C' \ar[d]^h \\ 
B \ar[r]^d \ar[d]_{pd}& C \ar[d]^p \\
D \ar@{=}[r] & D 
} 
\qquad \SelectTips{cm}{}
\xymatrix{
B\oplus C' \ar[r]^{\left [\begin{smallmatrix} d&h \end{smallmatrix}\right ]} \ar[d]_{\left
[\begin{smallmatrix} 1&0 \end{smallmatrix}\right ]} & C \ar[d]^p \\ 
B \ar[r]^{pd} & D }\] 

We claim that the right diagram is a pullback. To this end, let $\alpha:E\to C$ and $\beta:E\to B$ be morphisms
such that $p\alpha=pd\beta$. Since $p(d\beta-\alpha)=0$ and $h=\Ker(p)$, there is a unique morphism
$\delta:E\to C'$ such that $d\beta-\alpha=h\delta$. Then it is easy to check that $\left [\begin{smallmatrix} \beta \\
-\delta \end{smallmatrix}\right ]$ is the unique morphism $\left [\begin{smallmatrix} u \\ v
\end{smallmatrix}\right ]:E\to B\oplus C'$ such that $\left [\begin{smallmatrix} d & h \end{smallmatrix}\right
]\left [\begin{smallmatrix} u \\ v \end{smallmatrix}\right ]=\alpha$ and $\left [\begin{smallmatrix} 1 & 0
\end{smallmatrix}\right ]\left [\begin{smallmatrix} u \\ v \end{smallmatrix}\right ]=\beta$, and so the square is a
pullback. Now $\left [\begin{smallmatrix} d & h \end{smallmatrix}\right ]$ is a cokernel, because $pd$ is a semi-stable
cokernel.
 
We claim that $p=\Coker(h)$. Let $w:C\to F$ be a morphism such that $wh=0$. Since $wdg=0$ and $pd=\Coker(g)$, there is
a morphism $t:D\to F$ such that $tpd=wd$. It follows that $(tp-w)\left [\begin{smallmatrix} d & h
\end{smallmatrix}\right ]=0$, whence we have $tp=w$, because $\left [\begin{smallmatrix} d & h \end{smallmatrix}\right]$
is an epimorphism. Since $p$ is an epimorphism, we have the uniqueness of the morphism $t:D\to F$ such that $tp=w$.
Hence $p=\Coker(h)$.  

Now let $c:G\to D$ be a morphism. We shall show that there exists the pullback of $p$ and $c$. We may write $\left
[\begin{smallmatrix} p & 0 \end{smallmatrix}\right ]$ as the composition of the following morphisms:
\[\xymatrix{C\oplus B\ar[r]^{\left [\begin{smallmatrix} 1&-d\\0&1 \end{smallmatrix}\right ]} & C\oplus B\ar[r]^{\left
[\begin{smallmatrix} 1&0 \\ 0&pd \end{smallmatrix}\right ]} & C\oplus D\ar[r]^{\left
[\begin{smallmatrix} 1&0 \\ p&1 \end{smallmatrix}\right ]} & C\oplus D\ar[r]^{\left
[\begin{smallmatrix} 0&1 \end{smallmatrix}\right ]} & D}\] The first and the third morphisms are isomorphisms, and so
they are semi-stable cokernels. The second morphism is a semi-stable cokernel by Lemma
\ref{l:ds}. The last morphism is a semi-stable cokernel by Corollary \ref{c:proj}. Therefore, their composition $\left
[\begin{smallmatrix} p & 0 \end{smallmatrix}\right ]$ is also a semi-stable cokernel by Proposition \ref{p:E1}. Hence
$\left [\begin{smallmatrix} p & 0 \end{smallmatrix}\right ]$ and $c$ have a pullback square as follows:
\[\SelectTips{cm}{}
\xymatrix{
Y\ar[r]^{\gamma} \ar[d]_{\left [\begin{smallmatrix} \alpha' \\ \beta' \end{smallmatrix}\right ]} & G \ar[d]^c \\ 
C\oplus B \ar[r]^{\left [\begin{smallmatrix} p&0 \end{smallmatrix}\right ]} & D }
\] Consider the morphism $\left [\begin{smallmatrix} 0\\1 \end{smallmatrix}\right ]:B\to C\oplus B$. Since $\left
[\begin{smallmatrix} p&0 \end{smallmatrix}\right ]\left [\begin{smallmatrix} 0\\1 \end{smallmatrix}\right ]=0=c0$, by
the pullback property there is a unique morphism $\delta:B\to Y$ such that $\left [\begin{smallmatrix} \alpha' \\ \beta'
\end{smallmatrix}\right ]\delta=\left [\begin{smallmatrix} 0\\1 \end{smallmatrix}\right ]$ and $\gamma \delta=0$. In
particular, $\beta' \delta=1_B$, and so $\beta'$ is a retraction. Since $\mathcal{C}$ is weakly idempotent complete,
$\beta'$ has a kernel, say $i:K\to Y$. Let us show now that the following square
\[\SelectTips{cm}{}
\xymatrix{
K\ar[r]^{\gamma i} \ar[d]_{\alpha' i} & G \ar[d]^c \\ 
C \ar[r]^p & D }
\]
is a pullback of $p$ and $c$. To this end, let $a:E'\to C$ and $b:E'\to G$ be morphisms such that $pa=cb$. Then $\left
[\begin{smallmatrix} p&0 \end{smallmatrix}\right ]\left [\begin{smallmatrix} a\\0 \end{smallmatrix}\right ]=cb$, hence
the pullback square of $\left [\begin{smallmatrix} p&0 \end{smallmatrix}\right ]$ and $c$ implies the existence of a
unique morphism $v':E'\to Y$ such that $\left [\begin{smallmatrix} \alpha' \\ \beta' \end{smallmatrix}\right ]v'=\left
[\begin{smallmatrix} a\\0 \end{smallmatrix}\right ]$ and $\gamma v'=b$. Since $\beta' v'=0$ and $i=\Ker(\beta')$, there
is a unique morphism $w':E'\to K$ such that $v'=iw'$. Then we have $\alpha' iw'=\alpha' v'=a$ and $\gamma iw'=\gamma
v'=b$. Let us show that the morphism $w':E\to K$ is unique with these properties. Suppose that there is another morphism
$w'':E'\to K$ such that $\alpha' iw''=a$ and $\gamma iw''=b$. It follows that $\left [\begin{smallmatrix} \alpha' \\
\beta' \end{smallmatrix}\right ](iw'-iw'')=\left [\begin{smallmatrix} 0 \\ 0 \end{smallmatrix}\right ]$ and
$\gamma(iw'-iw'')=0$. Then we have $iw'-iw''=0$ by the pullback property of $\left [\begin{smallmatrix} p&0
\end{smallmatrix}\right ]$ and $c$, and so $w'=w''$, because $i$ is a monomorphism.

Now consider the pullback of $pd$ and $c$, say 
\[\SelectTips{cm}{}
\xymatrix{
K'\ar[r]^{\gamma'} \ar[d]_{\alpha''} & G \ar[d]^c \\ 
B \ar[r]^{pd} & D }
\] The pullback property of $p$ and $c$ implies the factorization of $\gamma'$ through $\gamma i$. Since $pd$ is
a semi-stable cokernel, so is $\gamma'$. Moreover, by Lemma \ref{l:RW} $\gamma i$ has a kernel, because $p$ has a
kernel. Then $\gamma i$ must be a cokernel by an argument similar to the first part of the proof. Hence $p$ is a
semi-stable cokernel.
\end{proof}

Now we are in a position to prove our main result, which generalizes \cite[Theorem~3.3]{SW}. Having prepared the
setting, we shall follow a similar path as in the cited result, slightly simplifying the proof of axiom $[E1]$.

\begin{theorem} Let $\mathcal{C}$ be a weakly idempotent complete additive category. Then the stable exact
sequences define an exact structure on $\mathcal{C}$. Moreover, this is the maximal exact structure on
$\mathcal{C}$. 
\end{theorem}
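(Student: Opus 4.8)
The plan is to designate as the class $\mathcal{E}$ of conflations all stable short exact sequences, verify that $\mathcal{E}$ satisfies the axioms $[E0]$, $[E1]$, $[E2]$, $[E2^{\rm op}]$, and then show that $\mathcal{E}$ contains every exact structure on $\mathcal{C}$; the latter yields maximality. Throughout I would use freely the results already established on semi-stable cokernels — Proposition \ref{p:E1}, Lemma \ref{l:ds}, Corollary \ref{c:proj}, Proposition \ref{p:obscure} and Remark \ref{r:rem} — together with their duals for semi-stable kernels.

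Axiom $[E0]$ is immediate: $0\to 0$ is an isomorphism, hence both a semi-stable cokernel (Remark \ref{r:rem}(iii)) and a semi-stable kernel. For $[E2]$, let $d:B\to C$ be a deflation with kernel $i:A\to B$ (a semi-stable kernel, since $d$ sits in a stable sequence), and let $h:C'\to C$ be arbitrary. Lemma \ref{l:RW} produces the pullback square and a morphism $d':B'\to C'$ which is a cokernel, with kernel $i':A\to B'$, and a morphism $g:B'\to B$ with $gi'=i$. Then $d'$ is a semi-stable cokernel by Remark \ref{r:rem}(ii), while $i'$ is a semi-stable kernel by the dual of Proposition \ref{p:obscure} applied to the factorization $i=gi'$. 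Hence $A\stackrel{i'}\to B'\stackrel{d'}\to C'$ is stable, so $d'$ is a deflation, and it is the required pullback of $d$ along $h$. Axiom $[E2^{\rm op}]$ follows by the dual argument.

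For $[E1]$, let $d:B\to C$ and $p:C\to D$ be deflations. By Proposition \ref{p:E1}, $pd$ is a semi-stable cokernel, so by Remark \ref{r:rem}(i) it has a kernel $g:B'\to B$ with $pd=\Coker(g)$. Running the diagram in the proof of Proposition \ref{p:E1} — that is, pulling $d$ back along $\Ker(p)$ — the resulting morphism $B'\to B$, being the pullback of $\Ker(p)$ along $d$, is the kernel of $pd$, so it may be identified with $g$; one then gets $gi'=i$ with $i:=\Ker(d)$ and $i':=\Ker(d')$, whence, as in the treatment of $[E2]$, $i'$ is a semi-stable kernel and $d'$ is a semi-stable cokernel, so $A\stackrel{i'}\to B'\stackrel{d'}\to C'$ is stable. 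It then remains to prove that $g$ itself is a semi-stable kernel, i.e. that the pushout of $g$ along any morphism exists and is a kernel. This is the step I expect to be the crux — it is the analogue, for the maximal exact structure, of the fact that a pullback of an inflation along a deflation is an inflation — and it is precisely here that weak idempotent completeness of $\mathcal{C}$ is indispensable; I would carry it out by a construction dual in spirit to the one closing the proof of Proposition \ref{p:obscure} (realizing the required pushout by splitting off a direct summand produced by a suitable retraction), following the path of \cite[Theorem~3.3]{SW}, the simplification being that Proposition \ref{p:E1} has already disposed of the semi-stable cokernel half. Once $g$ is known to be a semi-stable kernel, $B'\stackrel{g}\to B\stackrel{pd}\to D$ is stable and $pd$ is a deflation.

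Finally, for maximality, let $\mathcal{E}'$ be any exact structure on $\mathcal{C}$ and let $A\stackrel{i}\to B\stackrel{d}\to C$ be a conflation in $\mathcal{E}'$. Since $d$ is a deflation in $\mathcal{E}'$, axiom $[E2]$ for $\mathcal{E}'$ says that the pullback of $d$ along an arbitrary morphism exists and is again a deflation, in particular a cokernel; thus $d$ is a semi-stable cokernel. Dually, using $[E2^{\rm op}]$ for $\mathcal{E}'$, $i$ is a semi-stable kernel. Hence the sequence is stable, so $\mathcal{E}'\subseteq\mathcal{E}$. Since $\mathcal{E}$ is itself an exact structure on $\mathcal{C}$, it is the maximal one.
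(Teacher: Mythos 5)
Your treatment of $[E0]$, $[E2]$, $[E2^{\rm op}]$ and of maximality coincides with the paper's argument and is fine (the identification of the pullback of $\Ker(p)$ along $d$ with $\Ker(pd)$ is a small but harmless verification). The genuine gap is in $[E1]$: you correctly reduce it to showing that the kernel $j\colon K\to B$ of the composite (a semi-stable cokernel by Proposition \ref{p:E1}) is a semi-stable kernel, but at exactly this point --- which you yourself call the crux --- you give no argument, only the intention to dualize the construction closing Proposition \ref{p:obscure} and to follow the path of \cite{SW}. That is not a proof, and the sketched route does not obviously work: the splitting-off-a-direct-summand construction inside Proposition \ref{p:obscure} is available only because $\left[\begin{smallmatrix} p&0 \end{smallmatrix}\right]$ was already known to be a semi-stable cokernel (via its factorization through isomorphisms, $1\oplus pd$ and a projection), i.e.\ because $p$ occurs as a factor of a known semi-stable cokernel. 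Dually, to conclude that $j$ is a semi-stable kernel from the dual of Proposition \ref{p:obscure} you must first exhibit $j$ as the \emph{first} factor of a composite already known to be a semi-stable kernel, and producing such a composite is the real content of $[E1]$; moreover, the argument of \cite{SW} uses the existence of all kernels and cokernels in a pre-abelian category, which is not available here.

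For comparison, the paper's proof of this step runs as follows: with stable sequences $A\stackrel{i}\to B\stackrel{d}\to C$ and $A'\stackrel{i'}\to C\stackrel{d'}\to D$ and $j=\Ker(d'd)$, there is a unique $p\colon K\to A'$ with $dj=i'p$, and one has the identity $\left[\begin{smallmatrix} d\\1 \end{smallmatrix}\right]j=\left[\begin{smallmatrix} i'&0\\0&1 \end{smallmatrix}\right]\left[\begin{smallmatrix} p\\j \end{smallmatrix}\right]$. One checks that the square formed by $p,j,i',d$ is a pullback, so by Lemma \ref{l:RW} and the already established axiom $[E2]$ the row $A\stackrel{k}\to K\stackrel{p}\to A'$ is a stable exact sequence (in particular $p=\Coker(k)$); then one verifies that $\left[\begin{smallmatrix} p\\j \end{smallmatrix}\right]$ is the pushout of the semi-stable kernel $i$ along $k$, hence itself a semi-stable kernel by the dual of Remark \ref{r:rem}(ii). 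Since $\left[\begin{smallmatrix} i'&0\\0&1 \end{smallmatrix}\right]$ is a semi-stable kernel by the dual of Lemma \ref{l:ds}, the displayed identity exhibits $j$ as a first factor of a semi-stable kernel, and only now do the duals of Propositions \ref{p:E1} and \ref{p:obscure} yield that $j$ is semi-stable. Supplying this chain (or an equivalent construction) is what your proposal is missing.
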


\begin{proof} $[E0]$ This is clear.

$[E2]$ Let $A\stackrel{i}\to B\stackrel{d}\to C$ be a stable exact sequence, and let
$h:C'\to C$ be a morphism. Since $d$ is a semi-stable cokernel, we may consider the pullback of $d$ and $h$, and by
Lemma \ref{l:RW} we have a commutative diagram
\[\SelectTips{cm}{}
\xymatrix{
A \ar@{=}[d] \ar[r]^{i'} & B' \ar[d]_g \ar[r]^{d'} & C' \ar[d]^h \\ 
A \ar[r]^i & B \ar[r]^d  & C 
}\] 
in which $i'=\Ker(d')$ and $d'$ is a semi-stable cokernel, and so the upper row is a short exact sequence. Since $i=gi'$
is a semi-stable kernel, $i'$ is a semi-stable kernel by the dual of Proposition \ref{p:obscure}. 

$[E2^{\rm op}]$ Dual to $[E2]$.

$[E1]$ Let $A\stackrel{i}\to B\stackrel{d}\to C$ and $A'\stackrel{i'}\to C\stackrel{d'}\to D$ be stable exact
sequences. By Proposition \ref{p:E1}, $d'd:B\to D$ is a semi-stable cokernel. We shall show that its
kernel, say $j:K\to B$, is semi-stable. Since $d'dj=0$, there is a unique morphism $p:K\to
A'$ such that $dj=i'p$. We also have $d'd=\Coker(j)$. Note that we have the following equality:
\[\left [\begin{smallmatrix} d \\ 1 \end{smallmatrix}\right ]j=\left [\begin{smallmatrix} i'&0 \\ 0&1
\end{smallmatrix}\right ]\left [\begin{smallmatrix} p \\ j \end{smallmatrix}\right ]\] The morphism $\left
[\begin{smallmatrix} i'&0 \\ 0&1 \end{smallmatrix}\right ]:A'\oplus B\to C\oplus B$ is a semi-stable kernel by
Lemma \ref{l:ds}. Consequently, if we prove that $\left [\begin{smallmatrix} p \\ j \end{smallmatrix}\right ]:K\to
A'\oplus B$ is also a semi-stable kernel, then $j$ will be a semi-stable kernel by the duals of Propositions \ref{p:E1}
and \ref{p:obscure}, and we are done.

We claim first that there is a commutative diagram:
\[\SelectTips{cm}{}
\xymatrix{
A \ar@{=}[d] \ar[r]^k & K \ar[d]_j \ar[r]^p & A' \ar[d]^{i'} \\ 
A \ar[r]^i & B \ar[r]^d & C 
}\] in which the first row is a stable exact sequence. We shall show that the right square is a pullback. To this end,
let $\alpha:E\to A'$ and $\beta:E\to B$ be morphisms such that $i'\alpha=d\beta$. Since $d'd\beta=0$ and $j=\Ker(d'd)$,
there is a unique morphism $\gamma:E\to K$ such that $j\gamma=\beta$. We have $i'p\gamma=dj\gamma=d\beta=i'\alpha$,
whence $p\gamma=\alpha$, because $i'$ is a monomorphism. Moreover, it is easy to see that $\gamma$ is the unique
morphism with the required properties of the pullback. Now the existence of the required commutative diagram follows by
Lemma \ref{l:RW} and $[E2]$.

Next let us show that the following commutative square is a pushout:
\[\SelectTips{cm}{}
\xymatrix{
A \ar[d]_k \ar[r]^i & B \ar[d]^{\left [\begin{smallmatrix} 0 \\ 1 \end{smallmatrix}\right ]} \\ 
K \ar[r]^{\left [\begin{smallmatrix} p \\ j \end{smallmatrix}\right ]} & A'\oplus B 
}\] To this end, let $\alpha':K\to F$ and $\beta':B\to F$ be such that $\alpha' k=\beta' i$. Since $(\alpha'-\beta'
j)k=0$ and $p=\Coker(k)$, there is a unique morphism $\delta:A'\to F$ such that $\delta p=\alpha'-\beta'j$. Then it
follows that $\left [\begin{smallmatrix} \delta & \beta' \end{smallmatrix}\right ]$ is the unique morphism $\left
[\begin{smallmatrix} u & v \end{smallmatrix}\right ]:A'\oplus B\to F$ such that $\left [\begin{smallmatrix} u & v
\end{smallmatrix}\right ]\left [\begin{smallmatrix} p \\ j \end{smallmatrix}\right ]=\alpha'$ and $\left
[\begin{smallmatrix} u & v \end{smallmatrix}\right ]\left [\begin{smallmatrix} 0 \\ 1 \end{smallmatrix}\right ]=\beta'$.
Hence the square is a pushout.

Now $\left [\begin{smallmatrix} p \\ j \end{smallmatrix}\right ]$ is a semi-stable kernel by Remark
\ref{r:rem}. Consequently, $j$ is a semi-stable kernel by the above considerations.

Finally, consider an arbitrary exact structure $\mathcal{E}$ on $\mathcal{C}$, and let $X\stackrel{f}\to
Y\stackrel{g}\to Z$ be a conflation. Then by the axiom $[E2]$ for $\mathcal{E}$ the pullback of $g$
along an arbitrary morphism exists and is again a deflation, and so a cokernel. Hence $g$ is a semi-stable cokernel.
Dually, $f$ is a semi-stable kernel. Consequently, every conflation is a stable exact sequence. 
\end{proof}

\end{document}